\newtheorem{theorem}{Theorem}[section]
\newtheorem*{main_cor}{Corollary \ref{main_cor}}
\newtheorem*{main}{Theorem \ref{main}}
\newtheorem{lemma}[theorem]{Lemma}
\newtheorem{corollary}[theorem]{Corollary}
\theoremstyle{definition}		
\title{Rank gradient of small covers}
\author{Darlan Gir\~ao}
\begin{document}
\maketitle

\begin{abstract} We prove that if $M\longrightarrow P$ is a small cover of a compact right-angled hyperbolic polyhedron then $M$ admits a cofinal tower of finite sheeted covers with positive rank gradient. As a corollary, if $\pi_1(M)$ is commensurable with the reflection group of $P$, then    $M$ admits a cofinal tower of finite sheeted covers with positive rank gradient. 
\end{abstract}

\section{Introduction}\label{section_intro}

Let ￼ $P^n$ be an $n$￼-dimensional \textit{simple convex polytope}. Here $P^n$ ￼ is simple if the number of codimension-one faces  meeting at each vertex is $n$￼. Equivalently, the dual ￼$K_P$ ￼ of its boundary complex $\partial P^n$ is an $(n-1)$-dimensional simplicial sphere. A \textit{Small Cover} of $P^n$ is  an $n$￼-dimensional manifold endowed with an action of the group $\mathbb{Z}_2^n$ whose orbit space is $P^n$. The notion of small cover was introduced and studied by Davis and Januszkiewicz in \cite{DJ}. We will be dealing mostly with 3-dimensional polytopes. In the case    $P$ is a compact right-angled polyhedron in $\mathbb{H}^3$ then  Andreev's theorem \cite{An}  implies that all vertices have valence three and in particular $P$ is a simple convex polytope.  

Let $G$ be a finitely generated group. The \textit{rank of $G$} is the minimal  number of elements needed to generate $G$, and is denoted by  $\text{rk}(G)$.  If $G_j$ is a finite index subgroup of $G$, the Reidemeister-Schreier process \cite{LS} gives an upper bound on the rank of $G_j$.	 
$$\text{rk}(G_j)-1\leq [G:G_j](\text{rk}(G)-1)$$  
Recently Lackenby introduced the notion of \textit{rank gradient} \cite{La}.
 Given a finitely generated group $G$ and a collection $\{G_j\}$ of finite index subgroups,  the \textit{rank gradient} of  the pair $(G,\{G_j\})$ is defined by 
$$\text{rgr}(G,\{G_j\})=\lim_{j\rightarrow\infty}\frac{\text{rk}(G_j)-1}{[G:G_j]}$$   
We say that the  collection of finite index subgroups $\{G_j\}$ is \textit{cofinal} if $\cap_j G_j=\{1\}$, and we call it a  \textit{tower} if $G_{j+1}<G_j$.

In general it is very hard to construct co-final families $(G,\{G_j\})$ with positive rank gradient. For instance, it seems that  only  recently the first examples of torsion-free finite covolume Kleinian groups with this property were given in \cite{Gi}. Before stating  main result we need some terminology. 

If $M$ is a finite volume hyperbolic 3-manifold, we call the family of covers $\{M_j\longrightarrow M\}$ \textit{cofinal} (resp. a \textit{tower}) if $\{\pi_1(M_j)\}$ is cofinal (resp. a tower). By rank gradient of the the pair $(M,\{M_j\})$,  $\text{rgr}(M,\{M_j\})$, we mean the rank gradient of $(\pi_1(M),\{\pi_1(M_j)\})$.

\begin{theorem}\label{main}
Let $M\longrightarrow P$ be a small cover of a compact, right-angled hyperbolic polyhedron of dimension $3$. Then $M$ admits a cofinal tower of finite sheeted covers $\{M_j\longrightarrow M\}$ with positive rank gradient.
\end{theorem}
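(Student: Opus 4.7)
My plan is to prove the stronger property that $\pi_1(M)$ is \emph{large}---has a finite-index subgroup surjecting onto a non-abelian free group---and then invoke the known fact that largeness forces positive rank gradient along some cofinal tower. The first move is to identify $\pi_1(M)$ with $\ker\lambda$, where $\lambda\colon W_P\twoheadrightarrow\mathbb{Z}_2^3$ is the characteristic function of the small cover; this makes $\pi_1(M)$ an index-$8$ subgroup of the right-angled Coxeter reflection group $W_P$. Since largeness is commensurability-invariant, it suffices to prove $W_P$ is large, and at the end to intersect the tower produced in $W_P$ with $\pi_1(M)$, losing at most a factor of $8$ in the rank gradient.

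\textbf{Producing a free quotient.} The group $W_P$ has the Coxeter presentation with generators $r_F$ (one per face of $P$) and relations $r_F^2=1$ together with $(r_Fr_{F'})^2=1$ for adjacent $F,F'$. Suppose I find three pairwise non-adjacent faces $F_1,F_2,F_3$. The assignment $r_{F_i}\mapsto a_i$ ($i=1,2,3$), $r_F\mapsto 1$ for all other $F$, respects every defining relation of $W_P$---the only problematic commutations would involve two of $r_{F_1},r_{F_2},r_{F_3}$, but pairwise non-adjacency ensures no such commutation is demanded. This gives a surjection $W_P\twoheadrightarrow\mathbb{Z}_2*\mathbb{Z}_2*\mathbb{Z}_2$; the target has a rank-$2$ free subgroup of index $2$ (the kernel of the length-parity augmentation, torsion-free and of Euler characteristic $-1$), so $W_P$ is large.

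\textbf{From largeness to the tower.} With largeness in hand, I would invoke Lackenby's characterization \cite{La}: a large finitely presented group admits a cofinal tower of finite-index subgroups with positive rank gradient. Concretely, the scheme is to pull back a cofinal tower $\{K_n\}$ of $F_2$ via the surjection $\psi\colon U\twoheadrightarrow F_2$ (where $U\leq W_P$ is the finite-index subgroup supplied by largeness), obtaining $\{L_n:=\psi^{-1}(K_n)\}\subset U$ with $\text{rk}(L_n)\geq[F_2:K_n]+1$ by Nielsen-Schreier, and then to intersect with a cofinal tower of $W_P$ drawn from residual finiteness, chosen slowly enough that each intersection still surjects onto a subgroup of $K_n$ of controlled index. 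Intersecting the final tower with $\pi_1(M)$ yields the desired family of covers of $M$.

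\textbf{Main obstacle.} The combinatorial heart of the argument is the claim that every compact right-angled hyperbolic $3$-polyhedron $P$ admits three pairwise non-adjacent faces. For the right-angled dodecahedron, whose face graph is the icosahedral graph, this is tight---the independence number is exactly $3$, achieved for instance by a vertex together with two non-adjacent vertices of the wheel $W_5$ formed by its non-neighbors. For $P$ with more faces I expect ample room, but a uniform argument from Andreev's combinatorial constraints (simple, trivalent, $|F(P)|\geq 12$) still requires care. A fallback is to pass first to a reflective double of $P$ across a carefully chosen face: the double's reflection group is commensurable with $W_P$, and its face graph strictly dominates that of $P$, giving more flexibility. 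Handling this combinatorial step cleanly---and verifying the residual-finiteness bookkeeping in the tower construction---is the main technical burden.
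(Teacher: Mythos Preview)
Your plan has a genuine gap at the step ``largeness $\Rightarrow$ positive rank gradient along a cofinal tower.'' That implication is false in general: take $G=F_2\times\mathbb{Z}$. This group is large, finitely presented, and residually finite, yet every finite-index subgroup is isomorphic to $F_r\times\mathbb{Z}$ with $[G:H]=(r-1)n$ where $n\mathbb{Z}=H\cap\mathbb{Z}$, so $(\text{rk}(H)-1)/[G:H]=r/((r-1)n)$. Cofinality forces $n\to\infty$, hence the rank gradient is zero for \emph{every} cofinal tower. Your sketched fix---pulling back a tower from $F_2$ via $\psi\colon U\twoheadrightarrow F_2$ and then intersecting with a residually-finite cofinal sequence $\{N_m\}$ ``slowly enough''---runs into exactly this obstruction: a short computation shows the resulting lower bound on the ratio is $1/[\ker\psi:\ker\psi\cap N_m]$, which tends to $0$ as soon as $\ker\psi$ is infinite and $\{N_m\}$ is genuinely cofinal. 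Lackenby's theorem in \cite{La} in fact runs the other way (positive rank gradient together with failure of property~$(\tau)$ implies largeness), so it cannot be invoked here.

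The paper avoids this trap entirely. It builds an explicit cofinal tower by repeatedly reflecting $P$ across a face to get polyhedra $P_j$ exhausting $\mathbb{H}^3$, takes $M_j$ to be the cover of $M$ corresponding to $\pi_1(M)\cap G_j$ (where $G_j$ is the reflection group of $P_j$), and checks that each $M_j$ is itself a small cover of $P_j$. Then the Davis--Januszkiewicz formula (Theorem~\ref{DJthm}) gives $b_1(M_j;\mathbb{Z}_2)=V_j/2-1$ exactly, and Atkinson's two-sided volume bounds force $V_j$ to grow like $2^j$. The point is that the $\bmod\,2$ Betti number---not just the rank of some free quotient---is controlled directly and combinatorially at every stage, so nothing is lost when the tower is made cofinal. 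Your largeness argument, while correctly establishing that $W_P$ virtually surjects onto $F_2$, does not supply this kind of quantitative control along a cofinal chain.
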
 

We remark here that this is not true for 3-dimensional polytopes in general.  Let $T^3\longrightarrow C$ be the  covering  of a cube in 3-dimensional Euclidean space by the 3-torus $T^3$. It is easy to see that any subgroup of $\pi_1(T^3)=\mathbb{Z}^3$ has bounded rank and therefore rank gradient with respect to any tower of covers is zero. 

This theorem has the following consequence
\begin{corollary}\label{main_cor}
Let $M$ be a finite volume hyperbolic $3$-manifold such that $\pi_1(M)$ is commensurable with the group generated by reflections along the faces of  a compact, right-angled hyperbolic polyhedron $P\subset\mathbb{H}^3$. Then $M$ admits a cofinal tower of finite sheeted covers $\{M_j\longrightarrow M\}$ with positive rank gradient.
\end{corollary}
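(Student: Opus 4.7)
The plan is to deduce the corollary from Theorem \ref{main} by pulling back a cofinal tower with positive rank gradient across the commensurability. Fix a small cover $N\longrightarrow P$ (which exists because the reflection group $W$ of a compact right-angled hyperbolic $3$-polyhedron surjects onto $\mathbb{Z}_2^3$ with torsion-free kernel), so that $\pi_1(N)\le W$ has index $8$. Theorem \ref{main} then supplies a cofinal tower $\{L_j\}$ of finite-index subgroups of $\pi_1(N)$ with rank gradient $r>0$; the rest of the argument propagates this tower across to $\pi_1(M)$ while keeping rank gradient positive.

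Realize $\Gamma:=\pi_1(M)$ and $W$ as lattices in $\mathrm{Isom}(\mathbb{H}^3)$. After conjugating $\Gamma$, the commensurability hypothesis lets us assume $H:=\Gamma\cap W$ has finite index in both. I would then define $H_j:=L_j\cap H$. The family $\{H_j\}$ is a descending chain of finite-index subgroups of $H$ with $\cap_j H_j\subset\cap_j L_j=\{1\}$, so it is a cofinal tower in $H$, and hence in $\Gamma$. To estimate the rank gradient I would combine two elementary inequalities. The inclusion $L_j/(L_j\cap H)\hookrightarrow W/H$ yields $[L_j:H_j]\le[W:H]$, and therefore $[H:H_j]\le[W:L_j]$. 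Adjoining at most $[W:H]-1$ coset representatives to a generating set of $H_j$ gives the Schreier-type bound $\text{rk}(L_j)\le\text{rk}(H_j)+[W:H]-1$. Together these produce
$$\frac{\text{rk}(H_j)-1}{[H:H_j]}\ge\frac{\text{rk}(L_j)-[W:H]}{[W:L_j]}=\frac{1}{8}\cdot\frac{\text{rk}(L_j)-[W:H]}{[\pi_1(N):L_j]}\longrightarrow\frac{r}{8}>0,$$
so $\text{rgr}(H,\{H_j\})\ge r/8$. Since $[\Gamma:H_j]=[\Gamma:H][H:H_j]$, we get $\text{rgr}(\Gamma,\{H_j\})=\text{rgr}(H,\{H_j\})/[\Gamma:H]>0$, and the covers $M_j\longrightarrow M$ corresponding to $H_j$ form the required tower.

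The substance of the argument is carried by Theorem \ref{main}; the only work left is the bookkeeping in the previous paragraph, together with realizing abstract commensurability of lattices in $\mathrm{Isom}(\mathbb{H}^3)$ concretely as intersection of conjugates, which is standard for finite-volume hyperbolic $3$-manifolds. The main obstacle I anticipate is purely that the rank gradient is only known to scale under passage to finite-index subgroups via a Schreier-style lower bound, so one must check that the rank in $L_j$ is large enough that subtracting the constant $[W:H]-1$ is harmless in the limit — and this is automatic once $\text{rk}(L_j)\to\infty$, which positivity of $r$ already guarantees.
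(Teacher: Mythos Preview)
Your argument is correct, and it reaches the corollary by a genuinely different route than the paper. You treat Theorem~\ref{main} as a black box and then prove, via elementary index and Schreier-type bounds, that having a cofinal tower with positive rank gradient is preserved under commensurability. The paper instead reopens the proof of Theorem~\ref{main}: it uses the specific tower $\{M_j\}$ of small covers of the reflected polyhedra $P_j$, with the explicit value $b_1(M_j,\mathbb{Z}_2)=V_j/2-1$, and then bounds the rank of the further covers $N_j$ from below by invoking the Agol--Culler--Shalen theorem, $b_1(N_j,\mathbb{Z}_2)\ge b_1(M_j,\mathbb{Z}_2)-1$, for which it first passes to an orientable finite cover. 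Your approach is more elementary and more portable---it uses nothing about hyperbolic $3$-manifolds beyond Mostow rigidity to realize the commensurability concretely, and avoids the Agol--Culler--Shalen input entirely---while the paper's approach stays closer to the geometry and yields a lower bound on the rank gradient that is explicitly tied to the vertex count of $P$.
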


We note that this corollary is complementary to the results of \cite{Gi}, where ideal right-angled polyhedra were considered. The key idea there was to estimate the rank of the fundamental group of the manifolds by estimating their number of cusps. Here the estimates on the rank of the fundamental groups are given in terms of the rank of the $\mod$ 2  homology. 

The study of the rank of the fundamental group of (finite volume hyperbolic) 3-manifolds has always been a central theme in low dimensional topology. In recent years the study of the rank gradient for this class of groups has received special attention. For instance,  motivated by the seminal paper \cite{La1} of Lackenby, Long--Lubotzky--Reid \cite{LLR} prove that every finite volume hyperbolic 3-manifold has   a cofinal tower of covers in which the \textit{Heegaard genus}  grows linearly with the degree of the covers.  Whether or not the same happens to the rank of their fundamental groups is a major open problem. Another important recent work using these notions is \cite{AN}. There they connect the problem related to the growth of the rank of $\pi_1$ and the growth of the Heegaard genus in a cofinal tower of  hyperbolic 3-manifolds to a problem in topological dynamics, \textit{the fixed price problem} (see \cite{Fa} and \cite{Ga}). These papers have all been motivation for the current work. 
 
\section*{Acknowledgements}
The author would like to thank Alan Reid for suggesting him to look at  the  paper \cite{DJ}. He would also like to thank The Erwin Schrodinger International Institute for Mathematical Physics for the hospitality and financial support during his visit in the occasion of the workshop Golod-Shafarevich Groups and Algebras and Rank Gradient. Finally, he would like to thank the organizers of this event.

\section{Small covers}
Recall that an $n$-dimensional  convex polytope $P^n$ is simple if the number of codimension-one faces  meeting at each vertex is $n$. Equivalently, the dual ￼$K_P$ ￼ of its boundary complex $\partial P$ is an $(n-1)$-dimensional simplicial complex. A Small Cover of $P$ is  an $n$￼-dimensional manifold endowed with an action of the group $\mathbb{Z}_2^n$ whose orbit space is $P$. 

Let $K$ be a finite simplicial complex of dimension $n-1$.  For $0\leq i\leq n-1$, let $f_i$ be the number of $i$-simplices of  $K$. Define a polynomial $\Phi_K(t)$ of degree $n$ by 
$$\Phi_K(t)=(t-1)^n+\sum_{i=0}^{n-1}f_i(t-1)^{n-1-i}$$
and let $h_i$ be the coefficient of $t^{n-i}$ in this polynomial, i.e., 
$$\Phi_K(t)=\sum_{i=0}^{n}h_it^{n-i}$$ 

If we restrict to the case where $K$ is the dual $K_P$ of the boundary complex of a convex simple polytope $P^n$,  then one can see that $f_i$ is the number of faces of $P^n$ of codimension $i+1$.  Let $h_i(P^n)$ denote the coefficient of $t^{n-i}$ in $\Phi_{K_P}(t)$ 

One of the main results of \cite{DJ}, here stated in a very particular setting,  is

\begin{theorem}\label{DJthm}
Let $\pi:M^n\longrightarrow P^n$ be a small cover of a simple convex polytope $P^n$ and let  $b_i(M^n,\mathbb{Z}_2)$  be the $i^{\text{th}}$ $\mod 2$ Betti number of $M^n$. Then $b_i(M^n,\mathbb{Z}_2)=h_i(P^n)$
\end{theorem}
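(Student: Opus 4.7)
The plan is to follow Davis and Januszkiewicz's Morse-theoretic strategy for complex toric varieties, executed with $\mathbb{Z}_2$ coefficients. The three ingredients are an explicit Morse function on $M$, a combinatorial identification of its critical data with the $h$-vector, and a perfectness argument showing that the Morse inequalities become equalities mod~$2$.

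First, I would choose a linear functional $\phi:\mathbb{R}^n\to\mathbb{R}$ whose restriction to $P$ takes distinct values at all vertices, and set $\tilde\phi=\phi\circ\pi:M\to\mathbb{R}$. At any vertex $v$ of $P$, the characteristic vectors of the $n$ codimension-one faces through $v$ form a basis of $\mathbb{Z}_2^n$, so the orbit map $\pi$ is locally modelled on the quotient $\mathbb{R}^n\to\mathbb{R}^n/\mathbb{Z}_2^n\cong[0,\infty)^n$ by the $n$ coordinate reflections, and $\pi^{-1}(v)$ is the fixed point. Reading off this local model, $\tilde\phi$ has a non-degenerate critical point at $\pi^{-1}(v)$ of Morse index
$$
i_v=\#\{\text{edges of }P\text{ incident to }v\text{ along which }\phi\text{ decreases}\},
$$
while genericity rules out critical points in the preimages of faces of positive dimension. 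The standard shelling interpretation of the $h$-vector of a simple polytope then yields $\#\{v:i_v=i\}=h_i(P)$, so the number $c_i$ of index-$i$ critical points of $\tilde\phi$ equals $h_i(P)$.

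The main obstacle is showing that $\tilde\phi$ is a $\mathbb{Z}_2$-perfect Morse function, so that $b_i(M;\mathbb{Z}_2)=c_i$. In the complex toric setting this is automatic because every Morse index is even and the Morse chain complex has no adjacent cells to cancel, whereas for small covers the indices $i_v$ take all parities and one genuinely has to verify that the cellular boundary maps vanish mod~$2$. My plan is to make the Morse--Smale CW structure explicit: identify the descending disk at the critical point $\pi^{-1}(v)$ with $\pi^{-1}(\sigma_v)$, where $\sigma_v$ is the $i_v$-dimensional face of $P$ spanned by the down-edges at $v$. The restriction of $\pi$ over $\sigma_v$ is itself a small cover of $\sigma_v$, by the subgroup of $\mathbb{Z}_2^n$ generated by the characteristic vectors of the codimension-one faces of $P$ containing $\sigma_v$. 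Its boundary $\partial\pi^{-1}(\sigma_v)$ decomposes equivariantly into preimages of the proper faces of $\sigma_v$, and the residual reflections pair these lower-dimensional pieces in twos along the interior reflecting walls. This evenness forces every entry of the cellular boundary matrix to be zero mod~$2$, which establishes perfectness and gives $b_i(M;\mathbb{Z}_2)=c_i=h_i(P)$.
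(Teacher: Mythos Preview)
The paper does not prove this theorem at all; it is quoted from Davis--Januszkiewicz and used as a black box. So there is no in-paper argument to compare against, and your outline is essentially a reconstruction of the original Davis--Januszkiewicz Morse-theoretic proof rather than a parallel to anything done here.

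Your steps (a) and (b) --- building $\tilde\phi=\phi\circ\pi$, checking in the local model $\mathbb{R}^n\to[0,\infty)^n$ that the unique critical point over each vertex $v$ has index $i_v$ equal to the number of down-edges, and invoking the shelling interpretation $\#\{v:i_v=i\}=h_i(P)$ --- are correct and standard.

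The perfectness argument (c) has the right idea but is imprecisely stated at the crucial point. You write ``its boundary $\partial\pi^{-1}(\sigma_v)$ decomposes equivariantly\ldots'', but $\pi^{-1}(\sigma_v)$ is itself a small cover of the simple polytope $\sigma_v$ and hence a \emph{closed} $i_v$-manifold with empty boundary. This is not a minor slip: the closedness is exactly what drives $\mathbb{Z}_2$-perfectness. The clean way to finish is to note that the closure of the unstable disk $e_v$ is all of $\pi^{-1}(\sigma_v)$; since $e_v$ is the unique top-dimensional cell in the induced CW structure on this closed manifold, it must represent the $\mathbb{Z}_2$-fundamental class and is therefore a cycle, giving $\partial e_v=0$ in the cellular chain complex of $M$. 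Your ``residual reflections pair the pieces in twos'' picture is really the statement that in the \emph{polytope} cell decomposition of $\pi^{-1}(\sigma_v)$ (the one with $2^{i_v}$ top cells, each a copy of $\sigma_v$) every interior wall is shared by exactly two chambers, which is a proof that $\pi^{-1}(\sigma_v)$ is closed --- not a direct computation of the Morse attaching map. Disentangling the two cell structures, or simply appealing to the fundamental class as above, would make your argument complete.
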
 

As observed in \cite{DJ}, it  is somewhat surprising  that all $\mod 2$ Betti numbers  of a small cover $M^n$ depend on $P^n$ only. They also show that this theorem does not hold for  homology groups in general. They provide  small covers of a square $Q$ by   tori and a Klein bottles are such that the rational Betti numbers are not determined by $Q$.  

When $P$ is a right-angled dodecahedron in $\mathbb{H}^3$ then \cite{GS} shows that up to homeomorphism there exists exactly $25$  small covers of $P$.   \cite{Ch} estimates the number of orientable small covers of the $n$-dimensional cube.  Also, if $P$ is a  $3$-dimensional convex polytope,  \cite{NN} proves that $P$ admits an orientable small cover.  They also prove that unless $P$ is a $3$-simplex, then it admits a non-orientable small cover. 

\section{Proof of theorem}
In this section we prove

\begin{main} Let $M\longrightarrow P$ be a small cover of a compact, right-angled hyperbolic polyhedron of dimension $3$. Then $M$ admits a cofinal tower of finite sheeted covers $\{M_j\longrightarrow M\}$ with positive rank gradient.
\end{main}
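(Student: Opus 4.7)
The plan is to combine Theorem \ref{DJthm} with an iterated reflection-doubling construction of covers. First, expanding $\Phi_{K_P}(t)$ for $n=3$ yields $h_1(P) = f_0(P) - 3$, where $f_0(P)$ denotes the number of $2$-faces of $P$; by Theorem \ref{DJthm} this equals $b_1(M;\mathbb{Z}_2)$. Since every finitely generated group $G$ satisfies $\mathrm{rk}(G) \geq \dim_{\mathbb{Z}_2} H_1(G;\mathbb{Z}_2)$, the theorem reduces to producing a cofinal tower $\{M_j \to M\}$ in which each $M_j$ is a small cover of some compact right-angled hyperbolic polyhedron $P_j$ with $f_0(P_j)/[\pi_1(M):\pi_1(M_j)]$ bounded below by a positive constant.

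To produce such $P_j$ I would iterate the following doubling. Given a $2$-face $F \subset P$ with $k$ edges, the double $DP := P \cup r_F(P)$ is again a compact right-angled hyperbolic polyhedron (combinatorially simple, though not necessarily convex), because the $\pi/2$ dihedral angles at edges of $F$ force each pair $(F_i, r_F(F_i))$ of adjacent faces to coalesce into a single planar face of $DP$; a direct count gives $f_0(DP) = 2f_0(P) - 2 - k$. Group-theoretically, $W(DP)$ is the kernel of the parity homomorphism $\varphi_F:W(P) \to \mathbb{Z}_2$ sending $r_F$ to $1$ and all other reflection generators to $0$, hence an index-$2$ subgroup. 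The characteristic function $\lambda:\mathcal{F}(P) \to \mathbb{Z}_2^3$ defining $M$ extends to $\mathcal{F}(DP)$ --- coalesced faces inherit the common $\lambda$-value of their constituents, and every vertex of $DP$ is the image of a vertex of $P$ off $F$, so linear independence persists --- and, provided $\varphi_F$ restricts nontrivially to $\pi_1(M)$ (the generic situation, since otherwise the $\lambda$-values on $\mathcal{F}(P)\setminus\{F\}$ would have to lie in a proper affine subspace of $\mathbb{Z}_2^3$), the associated small cover $DM := \mathbb{H}^3/(\pi_1(M) \cap W(DP))$ is a degree-$2$ cover of $M$.

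Iterating, and always choosing the doubling face to be a pentagon, keeps the recursion $f_0(P_{j+1}) = 2f_0(P_j) - 7$: every compact right-angled hyperbolic polyhedron contains a pentagonal face, since $2e = 3v$ together with Euler's formula gives $e = 3(f_0-2)$, while the lower bound $2e \geq 5f_0$ (every $2$-face is a right-angled hyperbolic polygon and so has at least five edges) forces the average face-edge count $2e/f_0 = 6 - 12/f_0$ to be strictly less than $6$, so some face has exactly $5$ edges. Starting from $f_0(P) \geq 12$ this produces $f_0(P_j) \geq 5 \cdot 2^j + 7$, and since $[\pi_1(M):\pi_1(M_j)] = 2^j$ we obtain
\[ \mathrm{rgr}(M,\{M_j\}) \;\geq\; \lim_{j\to\infty} \frac{5\cdot 2^j + 3}{2^j} \;=\; 5 \;>\; 0. \]

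The main obstacle is cofinality, $\bigcap_j \pi_1(M_j) = \{1\}$. A careless sequence of doublings may produce fundamental domains that are thin in some direction and leave certain loops undetected. To ensure cofinality I plan to enumerate the countably many nontrivial elements $g_1, g_2, \ldots \in \pi_1(M)$ (residual finiteness of $W(P)$ is automatic from linearity) and, at the $j$-th step, select a pentagonal face of $P_j$ whose parity homomorphism sends $g_j$ to $1$ modulo $\pi_1(M_j)$; the abundance of pentagons in each $P_j$ and the variety of their parity homomorphisms should make such a choice possible. This cofinality step --- rather than the Betti number lower bound --- is where I expect the bulk of the technical difficulty to lie.
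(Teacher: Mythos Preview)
Your skeleton---iterated reflection doubling, Theorem~\ref{DJthm} to read off $b_1(\,\cdot\,;\mathbb{Z}_2)=h_1=f_0-3$, then the rank bound---is exactly the paper's. The differences are in how the two remaining estimates are handled. For the growth of $h_1(P_j)$ the paper does not track face counts combinatorially; it invokes Atkinson's volume inequalities $C(V-8)\le\mathrm{vol}(P)\le D(V-10)$ together with $\mathrm{vol}(P_j)=2^j\mathrm{vol}(P)$ to force $V_j$ (hence $f_0(P_j)$) to grow like $2^j$ \emph{regardless of which face is doubled at each stage}. Your pentagon argument is more elementary and gives an explicit constant, and your observation that a compact right-angled hyperbolic $3$-polyhedron always contains a pentagonal face is correct. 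What you lose is freedom: the recursion $f_0(P_{j+1})=2f_0(P_j)-2-k_j$ only yields exponential growth when $k_j$ stays bounded, so you are committed to doubling along small faces.

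That commitment is precisely what undermines your cofinality plan. The paper sidesteps the issue entirely: it chooses the reflecting faces so that $\{P_j\}$ exhausts $\mathbb{H}^3$, and then $\bigcap_j G_j=\{1\}$ follows from the fundamental-domain property (for any nontrivial $g$ and any basepoint $x$, eventually both $x$ and $g(x)$ lie in the interior of the same $P_j$). Because Atkinson's bound works for arbitrary face choices, cofinality and growth are completely decoupled. Your element-by-element scheme, by contrast, asks for a \emph{pentagonal} face of $P_j$ whose parity homomorphism detects a prescribed $g_j$; but if $g_j$ lies in the commutator subgroup of $W(P_j)$ then \emph{no} face parity detects it at that stage, and there is no evident mechanism forcing the faces that eventually do detect it (after further doublings) to be pentagons. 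This is a genuine gap, not just a missing detail. The cleanest repair is to adopt the exhaustion strategy for cofinality and then either invoke Atkinson, or argue separately that one can exhaust $\mathbb{H}^3$ using pentagon-doublings alone. (Incidentally, your parenthetical about $\varphi_F|_{\pi_1(M)}$ being nontrivial is not merely ``generic'': any vertex of $P$ off $F$ forces three $\lambda$-values in $\ker\psi$ to be independent, which is impossible in a $2$-dimensional subspace, so the index is always exactly $2$---this is the content of the paper's lemma.)
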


\begin{proof}
As observed above, when  $P$ be a compact right-angled polyhedron in $\mathbb{H}^3$ then  Andreev's theorem (\cite{An})  implies that all vertices have valence three and in particular $P$ is a simple convex polytope. 
Let $V, E$ and $F$ denote the number of vertices, edges and faces, respectively, of a $3$-dimensional simple polyhedron $P$. Straightforward computations show that 
$$\Phi_{K_P}(t)=t^3+(F-3)t^2+(3-2F+E)t+(V-E+F-1)$$
and thus $h_0(P)=1$, $h_1(P)=F-3$, $h_2(P)=3-2F+E$ and $h_3(P)=V-E+F-1$. Since $P$ is simple we also have $E=3V/2$. And since $V-E+F=2$ ($\partial P$ is topologically a sphere) this gives $F=(1/2)V+2$ and  therefore  $h_1(P)=(1/2)V-1$. 

The strategy involved in the proof is similar to the proof of the main theorem in \cite{Gi}. Given $P\in\mathbb{H}^3$,  construct a family of polyhedra $$P=P_0, P_1, \ldots, P_j,\ldots$$ such that $P_{j+1}$ is obtained from $P_j$ by reflecting $P_j$ along one of its faces. This must be done in a way such that the following holds: if $x\in\mathbb{H}^3$, then there exists $j$ sufficiently large so that $x$ lies in the interior of $P_j$. This means that the family $\{P_j\}$ is an exhaustion of $\mathbb{H}^3$. Denote by $G_j$ the group generated by reflections along the faces of $P_j$. If the family $\{P_j\}$ is constructed as above, then it is easy to see that $G_{j+1}<G_j$ (with index 2) and it can be shown that the tower $\{G_j\}$ is cofinal (see \cite{Ag}). We refer the reader to   \cite{Gi} for a detailed proof of this fact.      

Now let $M\longrightarrow P$ be a small cover of $P$, and let $M_j\longrightarrow M$ be the cover corresponding to the group $\pi_1(M)\cap G_j$. Recall that the degree of the cover $M\longrightarrow P$ is $2^3$.

\begin{lemma}
$[\pi_1(M_j):\pi_1(M_{j+1})]=2$
\end{lemma} 
\begin{proof}[Proof of lemma]
First observe that $[G_j:G_{j+1}]=2$. Since $\pi_1(M_1)=G_1\cap\pi_1(M)$,  we must have $[\pi_1(M):\pi_1(M_1)]\leq2$. If this index were $1$, then it would mean that $\pi_1(M_1)=\pi_1(M)<G_1$ from which would follow that $M_1$ is a manifold cover of the simple polyhedron $P_1$ of degree $2^2$. But this is not possible, since any manifold cover of a $3$-dimensional simple polyhedron must have degree at least $2^3$ (see \cite{DJ}, \cite{GS}).  The remaining cases follow by induction.  
\end{proof}

Since $[G_j:G_{j+1}]=2$, from the above lemma and an inductive argument  we see that $M_j\longrightarrow P_j$ is a cover of degree $2^3$. In particular this implies that $M_j$ is a small cover of $P_j$.  From theorem \ref{DJthm} we  have 
$$ b_1(M_j,\mathbb{Z}_2)=h_1(P_j)$$
Denote by $V_j$ the number of vertices of $P_j$. From   the computations of $h_1$,  $$b_1(M_j, \mathbb{Z}_2)=h_1(P_j)=\frac{V_j}{2}-1$$
Also note that a lower bound for $\text{rk}(\pi_1(M_j))$ is $b_1(M_j,\mathbb{Z}_2)$ and thus
$$\text{rk}(\pi_1(M_j))\geq\frac{V_j}{2}-1$$
We also have $[\pi_1(M):\pi_1(M_j)]=2^j$. Therefore 
$$\text{rgr}(\pi_1(M)),\{\pi_1(M_j)\})=\lim_{j\rightarrow\infty}\frac{\text{rk}(\pi_1(M_j))-1}{[\pi_1(M):\pi_1(M_j)]}\geq\lim_{j\rightarrow\infty}\frac{V_j-3}{2^{j+1}}$$ 
We thus need to show that $V_j$ is of magnitude $2^j$.  This is a consequence of a theorem of Atkinson \cite{At}.

\begin{theorem}[Atkinson] There exist constants $C,D>0$ such that if $P$ is a compact right-angled polyhedron in $\mathbb{H}^3$ with $V$ vertices then  
$$C(V-8)\leq\text{vol}(P)\leq D(V-10)$$
\end{theorem} 

We now observe that, in our setting, $\text{vol}(P_j)=2^j\text{vol}(P)$ and thus $$D(V_j-10)\geq 2^j\text{vol}(P)\geq 2^jC(V-8)$$ which gives $$V_j\geq 2^j\frac{C}{D}(V-8)+10$$ where $V$ is the number of vertices in $P$. 
Also, the second inequality in Atkinson's theorem provide $V >8$.  The theorem is now proved.
\end{proof}


\section{Extending the examples}

Theorem \ref{main} has an interesting corollary, which complements the family of manifolds provided in \cite{Gi}. 

\begin{main_cor}
Let $N$ be a closed hyperbolic $3$-manifold such that $\pi_1(N)$ is commensurable with the group generated by reflections along the faces of  a compact, right-angled hyperbolic polyhedron $P\subset\mathbb{H}^3$. Then $N$ admits a cofinal tower of finite sheeted covers $\{N_j\longrightarrow N\}$ with positive rank gradient.
\end{main_cor}

\begin{proof}
First we note that, by passing to a finite cover, we may assume $N$ is orientable.  Note also that \cite{NN} implies orientable small covers of $P$ exist and therefore $N$ is commensurable with a small cover $M\longrightarrow P$. Let $N'$ be the manifold cover of both $M$ and $N$ corresponding to the group $\pi_1(M)\cap\pi_1(N)$. Consider now $N_j\longrightarrow N$ corresponding to the group $\pi_1(N')\cap G_j$, where the family  $\{G_j\}$ is given as in the proof of theorem \ref{main}.   Consider also $\{M_j\}$ the tower where $M_j$ is a small cover of $P_j$, also as in the proof of theorem \ref{main}. 

Note that $\pi_1(N_j)=\pi_1(N')\cap G_j=\pi_1(N')\cap\pi_1(M_j)<\pi_1(M_j)$ and therefore we have the following diagram of covers, where the labels in the arrows indicate the degree of the cover. 

$$\begin{array}[c]{cccccccccccccc}
&&P             & \stackrel{2}\leftarrow&    P_1      &\stackrel{2}\leftarrow &\cdots&   \stackrel{2}\leftarrow  &       P_j     &\stackrel{2}\leftarrow \cdots \\
&&\uparrow\scriptstyle{2^3}&                 & \uparrow\scriptstyle{2^3}&                &           &                    & \uparrow\scriptstyle{2^3}&\\
&&M            &\stackrel{2}\leftarrow &    M_1     &\stackrel{2}\leftarrow &\cdots &  \stackrel{2}\leftarrow  &      M_j     &\stackrel{2}\leftarrow  \cdots\\
&&\uparrow&                 & \uparrow&                &           &                    & \uparrow&\\
N&\leftarrow &N'            &\leftarrow  &    N_1     &\leftarrow &\cdots &\leftarrow     &      N_j     &\leftarrow\cdots  \\
\end{array}$$

Agol-Culler-Shallen proved  the following in \cite{ACS} (see also \cite{Sh}).

\begin{theorem}
Let $M$ be a closed, orientable hyperbolic $3$ –manifold such that $b_1(M, \mathbb{Z}_p)=r$ for a given prime $p$. Then for any finite sheeted covering space $M'$ of $M$,  $b_1(M', \mathbb{Z}_p)\geq r-1$.  
\end{theorem}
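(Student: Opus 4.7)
The plan is to combine a transfer argument with a spectral-sequence analysis, isolating the $p$-part of the cover.

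If the degree $d := [\pi_1(M):\pi_1(M')]$ is coprime to $p$, then the transfer map $\tau\colon H^1(M';\mathbb{F}_p)\to H^1(M;\mathbb{F}_p)$ satisfies $\tau\circ\pi^* = d\cdot\mathrm{id}$, which is invertible in $\mathbb{F}_p$. Hence $\pi^*$ is injective and $b_1(M';\mathbb{F}_p)\geq r$, so the difficulty lies entirely in the $p$-part of the cover.

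To isolate it, I would filter $\pi_1(M')$ inside $\pi_1(M)$ by intermediate subgroups, so as to express $M'\to M$ as a tower of covers each of prime degree. Prime-degree steps $\ell\neq p$ preserve $b_1\bmod p$ by transfer and can be ignored. The remaining steps are of degree $p$; one analyzes each such step via the Hochschild--Serre spectral sequence
$$E_2^{i,j}=H^i(\mathbb{Z}/p;\,H^j(N_{s+1};\mathbb{F}_p))\Longrightarrow H^{i+j}(N_s;\mathbb{F}_p),$$
combined with Poincar\'e duality on the closed orientable $3$-manifolds $N_s$ and $N_{s+1}$, to bound the drop of $b_1$ locally. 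Because the intermediate covers need not be regular, this step forces one to pass through Galois closures at each stage, with some further bookkeeping to translate the resulting estimates back to $M'$.

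The main obstacle, and in my view the technical heart of the argument, is that the Hochschild--Serre analysis only yields that $b_1$ can drop by at most one per $\mathbb{Z}/p$-step. Naive iteration would give $b_1(M';\mathbb{F}_p)\geq r-k$, where $k$ is the length of the tower, far weaker than the claimed $r-1$. Promoting this to the uniform bound $r-1$ requires showing that the deficit cannot accumulate across stages: after a class has been lost once, any further $\mathbb{Z}/p$-cover which kills an old class must simultaneously create a compensating new one. I expect this to rely essentially on the closed orientable $3$-manifold structure of the intermediate manifolds through the Poincar\'e cup-product pairing $H^1\times H^2\to\mathbb{F}_p$, and possibly also on deeper three-dimensional inputs such as Culler--Shalen splittings arising from ideal points of character varieties, as the attribution to Agol--Culler--Shalen suggests.
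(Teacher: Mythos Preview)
The paper does not prove this statement; it is quoted as a black box from Agol--Culler--Shalen \cite{ACS} (see also \cite{Sh}) and then applied in the proof of Corollary~\ref{main_cor}. There is thus no in-paper argument to compare your proposal against.

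On its own terms, your proposal has two genuine gaps. First, the filtration into prime-degree covers need not exist: a maximal subgroup of a finite group can have composite index (for instance, the normaliser of a $5$-Sylow is maximal of index $6$ in $A_5$), so there may be no chain of subgroups between $\pi_1(M')$ and $\pi_1(M)$ with prime indices at each step. Passing to Galois closures, as you suggest, replaces $M'$ by a strictly deeper cover $\widetilde{M}$; a lower bound on $b_1(\widetilde{M};\mathbb{F}_p)$ then says nothing directly about $b_1(M';\mathbb{F}_p)$, since the possible drop of $b_1$ when passing to an intermediate cover is exactly the phenomenon under investigation. Second, and more seriously, you yourself identify that the Hochschild--Serre estimate gives only $b_1 \geq r-k$ after $k$ many $\mathbb{Z}/p$-steps, not $r-1$, and you offer no mechanism to prevent the deficit from accumulating beyond the hope that Poincar\'e duality or character-variety techniques will intervene. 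That non-accumulation is precisely the content of the Agol--Culler--Shalen theorem, and naming plausible ingredients is not the same as supplying the argument.
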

  
  We thus have $$\text{rk}(\pi_1(N_j))\geq b_1(N_j,\mathbb{Z}_2)\geq b_1(M_j,\mathbb{Z}_2)-1=\frac{V_j}{2}-2$$ and therefore all we need to do is show that $[\pi_1(N):\pi_1(N_{j})]$ grows at most as fast as $2^j$. But from the above diagram we see that $[\pi_1(N_j):\pi_1(N_{j+1})]\leq 2$ and we are done.
\end{proof}

\vspace{.4cm}
\noindent
\address{\textsc{Department of Mathematics,\\
Universidade Federal do Cear\'a}}\\
\email{\textit{E-mail:}\texttt{ dgirao@mat.ufc.br}}

\end{document}